\theoremstyle{plain}
\renewcommand\thefigure{\thesection.\@arabic\c@figure}
\renewcommand\thetable{\thesection.\@arabic\c@table}
\newtheorem{thm}{Theorem}[section]
\newtheorem{lemma}{Lemma}[section]
\newtheorem{rem}{Remark}[section]
\def \af {\alpha}
\def \om {\omega}
\def\0{{\bf 0}}
\begin{document}

{\title[sp]
{A spectral collocation method for nonlocal diffusion equations}
\author[
	H. Tian,\; J. Zhang
	]{
		\;\; Hao Tian${}^1$,    \;\; Jing Zhang${}^{2}$
		}
	\thanks{${}^1$ School of Mathematics and Statistics, Ocean University of China, Qingdao, Shandong 266100, China \\
		\indent ${}^{2}$School of Mathematics and Statistics $\&$  Hubei Key Laboratory of Mathematical Sciences,  Central China Normal University, Wuhan 430079, China. 
}
\keywords{nonlocal diffusion equations, spectral collocation methods, exponential convergence rate, maximum principle}
\begin{abstract}
	Nonlocal diffusion model provides an appropriate description of the diffusion process of solute in the complex medium, which cannot be described properly by
	classical theory of PDE. However, the operators in the nonlocal diffusion models are
	nonlocal, so the resulting numerical methods generate dense or full stiffness matrices. This imposes significant computational and memory challenge for a nonlocal diffusion model.  
	In this paper, we develop a spectral collocation method for the nonlocal diffusion model and provide a rigorous error analysis which
	theoretically justifies the spectral rate of convergence provided that the kernel functions and the source functions are sufficiently smooth. Compared to finite difference methods and finite element methods, because of the high order convergence rates, the numerical cost of spectral collocation methods will be greatly decreased. Numerical results confirm the exponential rate of convergence.
\end{abstract}
 \maketitle

\baselineskip 12.8pt

\thispagestyle{empty}

\section{Introduction}
Nonlocal models given in terms of integral equations in spatial variables have received much attention in recent years
\cite{HuHaBo,IgnRos,MacSil,Sil00,Sil10,SilAsk,SilLeh,SilEptWec,SilZimAbe,SunLi,WecEmm1}, from both theoretical and computational point of view \cite{BobDua,CheGun,DuGun,DuGunLeh,DuJuTia,DuTian,DuTiaZha,DuZho,OteMadAgw,SelGunPar,ZhoDu}.  The modeling of central
nonlocal diffusion is based on a radially symmetric kernel function, in the nonlocal operator, which describes the statistical nature of a
stochastic process by assuming the probabilities of a particle moving in arbitrary directions are the same so that the processes is determined
by the dependence of jump rates on jump sizes.
Our goal is to study nonlocal-convection diffusion models (of integral-type) and their effective numerical solutions.

Let $I$ be a finite bar in ${\mathbb R}$. Without loss of generality, we take $I = [-1, 1]$. A
nonlocal operator $\mathcal L_\delta $ is defined as, for any function $u = u(x) : I\rightarrow \mathbb R$,
$${\mathcal L_\delta}u=\int_{B_{\delta}(x)}\big(u(y)-u(x)\big)\gamma(x,y)dy$$
with $B_\delta(x) = \{y \in {\mathbb R} : |y - x| < \delta\}$ denoting a neighborhood centered at $x$ of radius $\delta$
which is the horizon parameter, and $\gamma(x, y): x\times y \rightarrow \mathbb R$ being a symmetric nonlocal
kernel (influence function), i.e., $\gamma(x, y) = \gamma(y, x)$, and $\gamma(x, y) = 0$ if $y\notin  B_\delta(x)$. In this paper, we assume that the kernel function satisfies transition invariance, i.e.,$\gamma(x,y)=\gamma(|y-x|)$.

The following nonlocal diffusion model of one-dimension steady case is our
main subject of interests here:
\begin{equation}\label{eqxy}
	\left\{
	\begin{split}
		&{\mathcal L_\delta}u=f(x), \;x\in I:=[-1,1],\\
		&u(x)=g(x),\;x\in I_c:=(-1-\delta,-1)\cup(1,1+\delta).
	\end{split}\right.
\end{equation}
We refer to \cite{DuHua} for connections between
nonlocal diffusion equations and stochastic jump processes. The well-posedness of \eqref{eqxy} was studied in \cite{DDGL}.  Moreover, it is known that, under proper assumptions of the kernel function $\gamma(x,y)$, the nonlocal problem (\ref{eqxy}) converges to the local problem, as the horizon $\delta\rightarrow 0$.

Recently, there have been a lot of efforts on developing numerical methods for nonlocal diffusion models \eqref{eqxy}, like finite difference methods, finite element methods and meshless methods. However, spectral method for nonlocal  diffusion (ND) models  has received remarkably little attention. In fact spectral methods have been broadly applied to many integral equations like Voltera integral equations \cite{WEI201852,WEI201415} and fraction PDE\cite{YANG20171218,YANG2017} .  The purpose of this paper is to give new insights into spectral collocation algorithms for nonlocal diffusion model. The main contributions reside in the following aspects:
\begin{itemize}
\item We construct a spectral-collocation scheme for a nonlocal diffusion model. 

Firstly, because of their nonlocality, numerical methods for the
nonlocal diffusion models usually generate dense stiffness matrices in which the bandwidths increase to infinity as the
mesh size decreases to zero. Direct solvers are widely used in the nonlocal diffusion modeling, which have $O(N^2)$ memory
requirement to store the stiffness matrix and $O(N^3)$ computations to find the numerical solutions. If we apply spectral methods to nonlocal diffusion model, the stiff matrices would still be dense, however the scale of the stiff matrix would be greatly decreased, because of the high accuracy of spectral methods. In this way, the computation and storage 

Secondly, the accuracy and convergence of the numerical
methods for the nonlocal diffusion models depends heavily on the accurate evaluation of the integrals, which is defined on $B_{\delta}(x_i)\cap\Omega_j$, where $B_{\delta}(x_i)$ is the neighborhood of a collocation point $x_i$ and $\Omega_j$ is the supporting area of a basis function $\phi_j(x)$. For numerical methods like finite element methods, finite difference methods, and collocation methods, the basis function corresponds to a local supporting area. When $\Omega_j$ is on the edge of $B_{\delta}(x_i)$, $B_{\delta}(x_i)\cap\Omega_j$ could be highly irregular, which will degrade the accuracy of the numerical integration presented above. In contrast to finite element method and other related methods, the support area of spectral basis functions is the whole computational area. Thus, the intersection area is $B_{\delta}(x_i)$. The influence area is a regular sphere in $2$-D or a ball in $3$-D. So these numerical integrations in spectral method can achieve high order accuracy. Hence, this spectral-collocation scheme for nonlocal diffusion model can be extended to the high dimensional cases easily.

\item We provide a rigorous error analysis which
theoretically justifies the spectral rate of convergence. We also present more numerical
evidences to demonstrate this surprising convergence behavior.
\end{itemize}

The rest of paper is organized as follows. In Section \ref{pre}, we review basic properties of Legendre polynomials and the related quadrature rules, cardinal bases. In Section \ref{alg}, we introduce the spectral approaches for $1$-D nonlocal diffusion model.
Maximum principle for the convergence analysis will be provided in Section \ref{max}.
The convergence analysis in $L^\infty$ space will be given in Section \ref{con}. Numerical
experiments are carried out in Section \ref{exp} to verify the theoretical results obtained in Section \ref{con}.

\section{Mathematical preliminaries}\label{pre}
In this section, we introduce some notation and review the relevant properties of the Legendre  polynomials, the associate  quadrature
rules, cardinal basis (cf. \cite{Szego75}).
\subsection{Notation}
\begin{itemize}

\item Let $\om^{\af,\beta}(x)=(1-x)^{\af}(1+x)^{\beta}\; (\af,\beta>-1)$ be  the Jacobi weight function defined in $I:=(-1,1),$ and
let $L^2_{\om^{\af,\beta}}(I)$ be the Hilbert space with the inner product and norm
\begin{equation*}
(u,v)_{\af,\beta}=\int_{I} u(x) v(x)\om^{\af,\beta}(x) dx,\quad \|u\|_{\af,\beta}=\sqrt{(u,u)_{\af,\beta}}.
\end{equation*}
For any integer $r\ge 0,$ we define the weighted Sobolev space:
\begin{equation*}
H^r_{\om^{\af,\beta}}(I)=\big\{ u\in L^2_{\om^{\af,\beta}}(I) :  u^{(k)}\in L^2_{\om^{\af,\beta}}(I),\; 0\le k\le r \big\},
\end{equation*}
equipped with the norm  and semi-norm:
\begin{equation*}
\|u\|_{r,\om^{\af,\beta}}=\Big(\sum_{k=0}^r \big\|u^{(k)} \big\|_{\om^{\af,\beta}}^2 \Big)^{\frac 1 2},\quad
|u|_{r,\om^{\af,\beta}}=\big\|u^{(r)} \big\|_{\om^{\af,\beta}}.
\end{equation*}
For any real $r> 0,$
the space $H^r_{\om^{\af,\beta}}(I)$ and its norm $\|\cdot\|_{r,\om^{\af,\beta}}$ are defined by space interpolation as
 in \cite{Adam75}.
In particular,  we have $L^2_{\omega^{0,0}}(I)=H^0_{\omega^{0,0}}(I)$ and denote its  inner product and norm
by $(\cdot,\cdot)$ and $\|\cdot\|,$ respectively.
\item We use $\partial_x^k u(x)$ to denote the ordinary derivative $\frac{d^k }{dx^k} u(x) =u^{(k)}(x)$ for $ k\ge 1.$\\
\item We introduce the non-uniformly (or anisotropic) Jacobi-weighted Sobolev space:
\begin{equation*}
B_{\af,\beta}^m(I):=\{u:\partial_x^k u\in L^2_{\omega^{\af+k,\beta+k}(I),\;0\leq k\leq m}\},\quad m\in{\mathbb N},
\end{equation*}
equipped with the inner product, norm and semi-norm
\begin{equation*}
\begin{split}
&(u,v)_{B_{\af,\beta}^m}=\sum_{k=0}^m (\partial_x^k u, \partial_x^k v)_{\omega^{\af+k,\beta+k}},\\
&\|u\|_{B_{\af,\beta}^m}=(u,u)_{B_{\af,\beta}^m}^{1/2},\quad |u|_{B_{\af,\beta}^m}=\|\partial_x^m u\|_{\omega^{\af+m,\beta+m}}.
\end{split}
\end{equation*}
\item We denote by ${\mathbb P}_{N}$ the set of all algebraic polynomials of degree  $\leq N.$
\end{itemize}
\subsection{Legendre polynomials}
The Legendre polynomials, denoted by $L_{n}(x),$ are the
are mutually orthogonal with respect to $\omega^{0,0}=1,$ and   normalized so  that
\begin{equation*}
\int_{-1}^{1}L_{m}(x)L_{n}(x)
dx=\gamma_n\delta_{mn},\quad \gamma_n=\frac{2}{2n+1},
\end{equation*}
where $\delta_{mn}$ is Kronecker symbol. They satisfy  the three-term recurrence relation:
\begin{equation}\label{Leg}
\begin{split}
&(n+1)L_{n+1}(x)=(2N+1)x L_n(x)-nL_{n-1}(x),\quad n\geq {1},\\
& L_{0}(x)=1,\quad L_1(x)=x.
\end{split}
\end{equation}
\subsection{Legendre-Gauss-Type Quadratures  and cardinal basis }The Legendre-Gauss-type nodes and weights $\{x_j,\omega_j\}^N_{j=0}$ can be derived from the following formulas:
\begin{itemize}
\item For the Legendre-Gauss (LG) quadrature,
\begin{equation}
\begin{split}
&\{x_j\}_{j=0}^N \;{\rm{are\; the\; zeros \;of }}\; L_{N+1}(x);\\
&\omega_j=\frac{2}{(1-x_j^2)[L'_{N+1}(x_j)]^2},\quad 0\leq i \leq N,
\end{split}
\end{equation}
\item For the Legendre-Gauss-Radau (LGR) quadrature,
\begin{equation}
\begin{split}
&\{x_j\}_{j=0}^N \;{\rm{are\; the\; zeros \;of }}\; L_{N}(x)+L_{N+1}(x);\\
&\omega_j=\frac{1}{(N+1)^2}\frac{1-x_j}{[L_{N}(x_j)]^2},\quad 0\leq j \leq N,
\end{split}
\end{equation}
\item For the Legendre-Gauss-Lobatto (LGL) quadrature,
\begin{equation}
\begin{split}
&\{x_j\}_{j=0}^N \;{\rm{are\; the\; zeros \;of }}\; (1-x^2)L'_N(x);\\
&\omega_j=\frac{2}{N(N+1)}\frac{1}{[L_{N}(x_j)]^2},\quad 0\leq j \leq N,
\end{split}
\end{equation}
\end{itemize}
With the above quadrature nodes and weights, there holds
\begin{equation}
\int_{-1}^1 p(x)dx=\sum_{j=0}^N p(x_j)\omega_j,\quad \forall p\in {\mathbb P}_{2N+\delta},
\end{equation}
where $\delta=1,0,-1$ for LG, LGR and LGL, respectively. Moreover, the conventional choice of grid points for Legendre spectral-collocation methods, is the Legendre Gauss-Lobatto points.

The spectral-collocation method is usually implemented in the physical space by
seeking approximate solution in the form $u_N\in {\mathbb P}_{N}$ such that
\begin{equation*}
u_N(x)=\sum_{k=0}^Nu_N(x_k)h_k(x),
\end{equation*}
where $\{h_k\}$ are the Lagrange basis polynomials (also referred to as nodal basis
functions), i.e.,$h_k\in {\mathbb P}_{N}$ and $h_k(x_j) =\delta_{kj}$.  We  write
\begin{equation*}
h_k(x) =\sum_{p=0}^N\beta_{p,k}L_p(x),\quad 0\leq p, k \leq N.
\end{equation*}
and determine the coefficients $\beta_{p,k}$  from $h_k(x_j) =\delta_{kj},\; 0\leq k,j\leq N.$ More precisely,
\begin{equation*}
\beta_{p,k}=\frac{1}{\gamma_p}\sum_{i=0}^N h_k(x_i)L_p(x_i)\omega_i/\gamma_p=L_p(x_k)\omega_k/\gamma_p,=\end{equation*}
where
\begin{equation*}
\gamma_p=\sum_{i=0}^NL_p^2(x_i)\omega_i=(p+\frac{1}{2})^{-1},\quad {\rm for}\quad p<N
\end{equation*}
and $\gamma=(N + 1/2)^{-1}$ for the Gauss and Gauss-Radau formulas, and $\gamma_N = 2/N$ for the
Gauss-Lobatto formula.
\section{Numerical Algorithm}\label{alg}
Firstly, we make the change of variable
\begin{equation}\label{transf}
y=x+s,\; s\in [-\delta,\delta].
\end{equation}
under which \eqref{eqxy} is transformed into
\begin{equation}\label{eqs}
\int_{x-\delta}^{x+\delta}u(y)\gamma(|y-x|)dy-u(x)\int_{-\delta}^\delta\gamma(|s|)ds=f(x), \; x\in I:=[-1,1].
\end{equation}
Let $\{x_i\}_{i=0}^N$  be a set of Legendre-Gauss-Lobatto points, and a approximation to \eqref{eqxy} using a Legendre collocation
approach is
\begin{equation}\label{eqxs}
\left\{
\begin{split}
&{\rm{Find}}\;u_N\in {\mathbb P}_{N} \; {\rm such \; that}\\
&\int_{\Lambda} u_N(y)\gamma(|y-x_i|)dy-u_N(x_i)\int_{-\delta}^\delta\gamma(|s|)ds=f(x_i)-\int_{\Lambda_c} g(y)\gamma(|y-x_i|)dy,\;0\leq i\leq N,
\end{split}\right.
\end{equation}
where $\Lambda:=(x_i-\delta,x_i+\delta)\cap (-1,1)$ and $ \Lambda_c:=(x_i-\delta,x_i+\delta)/\Lambda.$ To compute the integral term in $\eqref{eqxs}$ accurately,  we will transfer the integral interval $\Lambda$ to a fixed interval $[-1, 1]$
and then make use of some appropriate quadrature rule. Firstly, define $\Lambda:=(a_i,b_i)=(x_i-\delta,x_i+\delta)\cap (-1,1)$ and  make a simple linear transformation:
\begin{equation*}
y=\frac{b_i-a_i}{2}t+\frac{a_i+b_i}{2},\quad t\in (-1,1).
\end{equation*}
Then \eqref{eqxs} becomes
\begin{equation}\label{eqxt}
\begin{split}
\frac{b_i-a_i}{2}\int_{-1}^1u_N(y(x_i,t))\gamma(x,y(x_i,t_j))dt-&u_N(x_i)\int_{-\delta}^\delta\gamma(|s|)ds\\
=&f(x_i)-\int_{\Lambda_c} g(y)\gamma(|y-x_i|)dy,\quad 0\leq i \leq N.
\end{split}
\end{equation}
We then approximate the integral term by a Legendre-Gauss type quadrature formula with the notes and weights denoted by $\{t_j,\omega_j\}^M_{j=0}$, leading to the Legendre collocation scheme (with numerical integration) for \eqref{eqxt}:
\begin{equation}\label{eqxtj}
\left\{
\begin{split}
&{\rm{Find}}\;u_N\in {\mathbb P}_{N} \; {\rm such \; that}&\\
&\frac{b_i-a_i}{2}\sum_{j=0}^Nu_N(y(x_i,t_j))\gamma(x,y(x_i,t_j))\omega_j-u_N(x_i)\int_{-\delta}^\delta\gamma(|s|)ds\\
&=f(x_i)-\int_{\Lambda_c} g(y)\gamma(|y-x_i|)dy,\quad 0\leq i\leq N.
\end{split}\right.
\end{equation}
We expand the approximate solution $u_N$ as
\begin{equation}
u_N(x)=\sum_{k=0}^Nu_kh_k(x).
\end{equation}
Inserting it into \eqref{eqxtj} leads to
\begin{equation}\label{coll}
\begin{split}
\frac{b_i-a_i}{2}\sum_{k=0}^Nu_k\Big(\sum_{j=0}^Nh_k(y(x_i,t_j))\gamma(x,y(x_i,t_j))\omega_j\Big)&-u_i\int_{-\delta}^\delta\gamma(|s|)ds\\
&=f(x_i)-\int_{\Lambda_c} g(y)\gamma(|y-x_i|)dy,\;0\leq i\leq N.
\end{split}
\end{equation}
\begin{rem}
It is worthwhile to point out that the collocation points $\{x_j\}_{j=0}^N$ and quadrature points $\{t_j\}_{j=0}^N$ could be chosen differently in type and number. As a result, we can also use Legendre-Gauss-Radau or Legendre-Gauss-Lobatto for the integral term.
\end{rem}
More precisely, we divide the integral range of $(x_i-\delta,x_i+\delta)$ into three case.
\begin{itemize}
\item {\rm{Case I:}} $ -1<x_i-\delta<x_i+\delta<1.$ 
For easy of implementation and analysis, we
convert the interval $[x_i-\delta,x_i+\delta]$ to $[-1,1]$ by a linear transformation:
\begin{equation}
y=x_i+\delta t,\quad t\in [-1,1].
\end{equation}
The scheme becomes
\begin{equation}\label{equnxi}
\delta\int_{-1}^{1}u_N(y(x_i,t))\gamma(x,y(x_i,t))dt-
u_N(x_i)\int_{-\delta}^\delta\gamma(|s|)ds
=f(x_i),\;0\leq i\leq N.
\end{equation}
Next, we approximate the integral term by a Legendre-Gauss-Lobatto type quadrature formula
with the notes and weights denoted by $\{t_j,\omega_j\}_{j=0}^M$, leading to
\begin{equation}\label{equnxisj}
\delta\sum_{j=0}^Mu_N(y(x_i,t_j))\gamma(x,y(x_i,t_j))\omega_j-u_N(x_i)\int_{-\delta}^\delta\gamma(|s|)ds
=f(x_i),\;0\leq i \leq N.
\end{equation}
Let $\{h_k\}^N_{k=0}$ be the Lagrange basis
polynomials associated with the Legendre-Gauss-lobatto-type points $\{x_i\}^N_{i=0}$. We expand the approximate solution $u_N$ as
\begin{equation}
u_N=\sum_{k=0}^Nu_kh_k(x).
\end{equation}
Inserting it into \eqref{equnxisj} leads to
\begin{equation}\label{eqxisj}
\delta\sum_{k=0}^Nu_k\sum_{j=0}^Mh_k(y(x_i,t_j))\gamma(x_i,y(x_i,t_j))\omega_j-u_i\int_{-\delta}^\delta\gamma(|s|)ds
=f(x_i),\;0\leq i \leq N.
\end{equation}
\item{\rm{Case II:}} $x_i-\delta<-1<x_i+\delta<1$, we have
\begin{equation}\label{case2}
\begin{split}
\int_{-1}^{x_i+\delta}u_N(y)\gamma(|y-x_i|)dy-
u_N(x_i)&\int_{-\delta}^\delta\gamma(|s|)ds\\
=&f(x_i)-\int_{x_i-\delta}^{-1}g(y)\gamma(|y-x_i|)dy,\;0\leq i\leq N.
\end{split}
\end{equation}
Next, we convert the interval $[-1,x_i+\delta,]$ to $[-1,1]$ and approximate the integral term by a Legendre-Gauss-Lobatto type quadrature $\{t_j,\omega_j\}_{j=0}^M$, leading to
\begin{equation}\label{CaseIIxi}
\begin{split}
\frac{x_i+\delta+1}{2}\sum_{j=0}^Mu_N(y(x_i,t_j))&\gamma(x_i,y(x_i,t_j))\omega_j-u_N(x_i)\int_{-\delta}^\delta\gamma(|s|)ds
\\=&f(x_i)+\int^{-1}_{x_i-\delta}
g(y)\gamma(|y-x_i|)dy,\;0\leq i \leq N.
\end{split}
\end{equation}
We expand the approximate solution $u_N$ as
\begin{equation}
u_N=\sum_{k=0}^Nu_kh_k(x).
\end{equation}
Plugging it into \eqref{CaseIIxi} leads to
\begin{equation}
\begin{split}
\frac{x_i+\delta+1}{2}\sum_{k=0}^Nu_k\sum_{j=0}^Mh_k(y(x_i,t_j))&\gamma(x_i,y(x_i,t_j))\omega_j-u_i\int_{-\delta}^\delta\gamma(|s|)ds
\\=&f(x_i)+\int_{x_i-\delta}^{-1}
g(y)\gamma(|y-x_i|)dy,\;0\leq i \leq N.
\end{split}
\end{equation}

\item {\rm{Case III:}} $-1<x_i-\delta<1<x_i+\delta,$ we have
\begin{equation}
\begin{split}
\int_{x_i-\delta}^{1}u_N(y)\gamma(|y-x_i|)dy-
u_N(x_i)&\int_{-\delta}^\delta\gamma(|s|)ds\\
=&f(x_i)-\int_{-1}^{x_i-\delta}g(y)\gamma(|y-x_i|)dy,\;0\leq i\leq N.
\end{split}
\end{equation}
Then, we can treat  this case in the same fashion as above.

\end{itemize}

}

\section{Maximum principle}\label{max}
The nonlocal diffusion operator is  analogous to its local counterpart $\Delta u$.
It is known that the  local diffusion equation  satisfies the maximum principle. The proposed nonlocal diffusion operator satisfies the following property.

\begin{thm}\label{maximum_model} {\rm (Maximum principle)}
	Suppose that ${\mathcal L_\delta}u$ is well-defined in $I:=[-1,1]$.
	If ${\mathcal L_\delta}u\geq 0$ in $I$, then a maximum of $u$ is attained in the interaction domain
	$I_c=(-\delta-1,-1)\cup (1,1+\delta)$.
\end{thm}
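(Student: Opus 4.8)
The plan is to argue from the sign of the integrand of $\mathcal L_\delta u$ at a maximizer, together with the nonnegativity (indeed strict positivity on $(0,\delta)$) of the influence function $\gamma$. Throughout I assume $u$ is continuous on the closure $[-1-\delta,1+\delta]$ and that $\gamma(r)>0$ for $0<r<\delta$, as is standard for an influence function; these are exactly the ingredients that guarantee the maximum is attained and that let the argument propagate. Set $M=\max_{x\in[-1-\delta,1+\delta]}u(x)$, which exists by compactness and continuity. If $M$ is attained at some point of $I_c$ there is nothing to prove, so I assume the set $A=\{x\in I:u(x)=M\}$ of interior maximizers is nonempty; it is closed since $u$ is continuous.

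The first key step is a pointwise observation at any $x_0\in A$. Because $u(x_0)=M$ is the global maximum, $u(y)-u(x_0)\le 0$ for every $y\in B_\delta(x_0)$, and since $\gamma\ge 0$ the integrand in
\begin{equation*}
\mathcal L_\delta u(x_0)=\int_{B_\delta(x_0)}\bigl(u(y)-u(x_0)\bigr)\gamma(|y-x_0|)\,dy
\end{equation*}
is nonpositive, whence $\mathcal L_\delta u(x_0)\le 0$. Combined with the hypothesis $\mathcal L_\delta u(x_0)\ge 0$ this forces $\mathcal L_\delta u(x_0)=0$. A nonpositive integrand with vanishing integral must vanish almost everywhere, so $\bigl(u(y)-M\bigr)\gamma(|y-x_0|)=0$ for a.e.\ $y\in B_\delta(x_0)$; using $\gamma(|y-x_0|)>0$ on $0<|y-x_0|<\delta$ and the continuity of $u$, I conclude $u\equiv M$ on the whole neighborhood $B_\delta(x_0)=(x_0-\delta,x_0+\delta)$. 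In other words, once the maximum is attained at an interior point, it is attained on an entire $\delta$-neighborhood.

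The second step propagates this to the interaction domain. Let $\bar x=\sup A$; since $A$ is closed and nonempty, $\bar x\in A$, and the observation above gives $u\equiv M$ on $(\bar x-\delta,\bar x+\delta)$. I claim $\bar x+\delta>1$. Indeed, if $\bar x+\delta\le 1$ then every $y$ with $\bar x<y<\bar x+\delta$ satisfies $y\in(-1,1)\subseteq I$ and $u(y)=M$, so $y\in A$ with $y>\bar x$, contradicting $\bar x=\sup A$. Hence $\bar x+\delta>1$, and the nonempty interval $(1,\bar x+\delta)$ is contained in both $B_\delta(\bar x)$ and $(1,1+\delta)\subseteq I_c$. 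On this interval $u\equiv M$, so the maximum is attained at a point of $I_c$, as desired. Symmetrically, working with $\inf A$ reaches the left component $(-1-\delta,-1)$ of $I_c$.

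The main obstacle is not the algebra but the regularity and positivity hypotheses that make each implication valid. The passage from $\mathcal L_\delta u(x_0)=0$ to $u\equiv M$ on $B_\delta(x_0)$ relies on two facts that deserve care: the continuity of $u$ (to upgrade the almost-everywhere conclusion to an everywhere statement, and to guarantee that $A$ is closed so that $\sup A$ is itself a maximizer), and the strict positivity of $\gamma$ on $(0,\delta)$ (so that the vanishing of $(u-M)\gamma$ genuinely forces $u=M$ rather than merely constraining $u$ on the support of $\gamma$). If $\gamma$ were allowed to vanish on subintervals of $(0,\delta)$, the propagation could stall before reaching $x=1$, and one would need an additional connectedness argument on $\{\gamma>0\}$; this is the place where the structural assumption on the kernel is genuinely used.
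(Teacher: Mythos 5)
Your proof is correct under the hypotheses you make explicit, but it takes a genuinely different route from the paper. The paper uses the classical perturbation trick: it sets $v(x)=u(x)+\epsilon e^{rx}$ with $\epsilon,r>0$, computes $\mathcal{L}_\delta v(x)=\mathcal{L}_\delta u(x)+\epsilon e^{rx}\int_{-\delta}^{\delta}(e^{rs}-1)\gamma(|s|)\,ds$, observes that by the symmetry of the kernel this integral equals $\int_0^\delta\big(e^{rs}+e^{-rs}-2\big)\gamma(s)\,ds>0$, so that $\mathcal{L}_\delta v>0$ strictly; at an interior maximizer the same sign-of-the-integrand observation you use gives $\mathcal{L}_\delta v(x_0)\le 0$, a contradiction, and letting $\epsilon\to 0$ transfers the conclusion to $u$. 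You instead analyze the equality case of the operator directly: at an interior maximizer the hypothesis forces $\mathcal{L}_\delta u(x_0)=0$, a nonpositive integrand with zero integral vanishes a.e., and strict positivity of $\gamma$ on $(0,\delta)$ plus continuity of $u$ yield $u\equiv M$ on $B_\delta(x_0)$; taking $\bar x=\sup A$ then propagates the constancy into $(1,\bar x+\delta)\subset I_c$. What each approach buys: the paper's perturbation needs only $\gamma\ge 0$ and $\gamma$ not a.e.\ zero --- exactly the weaker kernel positivity you correctly identify as the vulnerable point of your propagation step --- whereas your argument needs $\gamma>0$ on all of $(0,\delta)$ but proves strictly more, namely a strong maximum principle (constancy of $u$ on the full $\delta$-neighborhood of any interior maximizer) and attainment of the maximum at interior points of the collar, not merely its closure. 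Both proofs tacitly require $u$ to be continuous so that maxima are attained; you state this assumption, while the paper leaves it implicit (its phrase ``assume that $v$ attains a nonnegative maximum'' and the unexplained limit $\epsilon\to 0$ both rely on it).

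One small logical wrinkle in your write-up: the dichotomy ``either $M$ is attained in $I_c$ or $A\neq\emptyset$'' omits the case where the maximum over $[-1-\delta,1+\delta]$ is attained only at the endpoints $\pm(1+\delta)$, which lie in neither the open set $I_c$ nor $I$. This case can occur: take $u(x)=x$ with a constant kernel, so $\mathcal{L}_\delta u\equiv 0\ge 0$ while the maximum sits only at $1+\delta$. But in that case the theorem as literally stated (with open $I_c$) fails as well, and the paper's own $\epsilon\to 0$ argument likewise only yields $\sup_{I}u\le \sup_{\overline{I_c}}u$; so this is a defect of the statement rather than of your argument, repaired by replacing $I_c$ with its closure or phrasing the conclusion as a supremum inequality.
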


\begin{proof}
	Consider an auxiliary function $v(x)=u(x)+\epsilon e^{rx}$, where $\epsilon>0$, $r>0$, so
	\begin{eqnarray}\label{maximum_principle0}
	{\mathcal L_\delta}v(x)&=&{\mathcal L_\delta}u(x)+\epsilon e^{rx}\int_{-\delta}^{\delta} (e^{rs}-1)\gamma(|s|)\, ds\nonumber
	\end{eqnarray}
	We can easily get ${\mathcal L_\delta}v(x)>0$. We claim $v(x)$ cannot attains a maximumin $\Omega_s$. If not, assume that $v$ attains a nonnegative maximum at $x_0\in \Omega_s$, i.e., $v(x_0)=\max_{x\in\Omega} v(x)\geq 0$.  We have
	\begin{eqnarray}\label{maximum_principle1}
		{\mathcal L_\delta}v(x_0)&=&\int_{x_0-\delta}^{x_0+\delta} (v(y)-v(x_0))\gamma(|y-x_0|)\, dy\nonumber
	\end{eqnarray}
	Since  $v(y)-v(x_0)\leq 0$, it is easy to verify that the integral in \eqref{maximum_principle1} satisfies
	\begin{eqnarray}\label{mp_first_integral}
		&&\int_{x_0-\delta}^{x_0+\delta} \overbrace{(v(y)-v(x_0))}^{\leq 0}\overbrace{\gamma(|y-x_0|)}^{\geq 0}\, d y\leq 0. \nonumber
	\end{eqnarray}
Hence, we get ${\mathcal L_\delta}v(x_0)\leq 0$, which is a contradiction with the assumption of  ${\mathcal L_\delta}v(x_0)>0$. Let $\epsilon$ goes to 0, we can get the required result.
\end{proof}

\begin{lemma}\label{nolocalG}
If an integrable function $e(t)$ satisfies
\begin{equation}\label{Growneq}
e(t)\int_{-\delta}^\delta\gamma(|s|)ds-\int_{t-\delta}^{t+\delta}e(y)\gamma(|y-t|)dy=G(t),\quad t\in [-1,1],
\end{equation}
where $G(t)$ is an integrable function, then
	\begin{equation}
	\|e(t)\|_{L^\infty}(I)\leq C\|G\|_{L^\infty}(I).
	\end{equation}
\end{lemma}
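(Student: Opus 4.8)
The plan is to recognize the left-hand side of \eqref{Growneq} as $-{\mathcal L_\delta}e$, so that the hypothesis reads ${\mathcal L_\delta}e(t)=-G(t)$ on $I$, and then to deduce the bound by a comparison (barrier) argument built on the maximum principle of Theorem \ref{maximum_model}. Since the estimate is asserted only on $I$ while \eqref{Growneq} samples $e$ on $[-1-\delta,1+\delta]$, I will use the standing fact (consistent with the context, where $e$ is an error matching the data) that $e\equiv 0$ on the interaction domain $I_c=(-1-\delta,-1)\cup(1,1+\delta)$; this is what supplies the control of the boundary contribution.

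First I would construct an explicit barrier $\psi$ satisfying ${\mathcal L_\delta}\psi\le -1$ on $I$ and $\psi\ge 0$ on $I\cup I_c$. A convenient choice is the quadratic $\psi(x)=\big((1+\delta)^2-x^2\big)/(2\sigma)$, where $\sigma:=\int_0^\delta s^2\gamma(s)\,ds>0$. Computing directly via the substitution $y=x+s$, ${\mathcal L_\delta}(x^2)=\int_{-\delta}^\delta\big((x+s)^2-x^2\big)\gamma(|s|)\,ds$; the cross term $\int_{-\delta}^\delta 2xs\,\gamma(|s|)\,ds$ vanishes because $s\,\gamma(|s|)$ is odd, leaving ${\mathcal L_\delta}(x^2)=\int_{-\delta}^\delta s^2\gamma(|s|)\,ds=2\sigma$. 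Since ${\mathcal L_\delta}$ annihilates constants, this gives ${\mathcal L_\delta}\psi=-1$ on all of $I$, while $|x|\le 1+\delta$ on $I\cup I_c$ yields $\psi\ge 0$ there, with $\max\psi=(1+\delta)^2/(2\sigma)=:C$.

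Then I would run the comparison separately for the two signs. Set $w_{\pm}(x)=\pm e(x)-\|G\|_{L^\infty(I)}\,\psi(x)$. Using ${\mathcal L_\delta}e=-G$ and ${\mathcal L_\delta}\psi=-1$, we obtain ${\mathcal L_\delta}w_\pm=\mp G+\|G\|_{L^\infty(I)}\ge 0$ on $I$, because $\mp G\ge -\|G\|_{L^\infty(I)}$. By Theorem \ref{maximum_model} the maximum of $w_\pm$ over $I\cup I_c$ is attained on $I_c$, where $e=0$ forces $w_\pm=-\|G\|_{L^\infty(I)}\,\psi\le 0$. Hence $w_\pm\le 0$ on $I$, i.e. $\pm e(x)\le\|G\|_{L^\infty(I)}\,\psi(x)\le C\|G\|_{L^\infty(I)}$; combining the two signs yields $\|e\|_{L^\infty(I)}\le C\|G\|_{L^\infty(I)}$ with $C=(1+\delta)^2/(2\sigma)$.

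The main obstacle is constructing a barrier whose image under ${\mathcal L_\delta}$ is bounded strictly away from $0$: this is precisely where the positivity and finiteness of the second moment $\sigma=\int_0^\delta s^2\gamma(s)\,ds$ enter, and one should verify $\sigma>0$ under the standing assumptions on $\gamma$ (nonnegative, not identically zero, with finite second moment). A secondary technical point is that Theorem \ref{maximum_model} is established for a strict subsolution through the auxiliary perturbation $\epsilon e^{rx}$; since ${\mathcal L_\delta}w_\pm\ge 0$ is only non-strict here, I would apply the same $\epsilon$-perturbation to $w_\pm$ and let $\epsilon\to 0$ at the end, exactly as in the proof of Theorem \ref{maximum_model}.
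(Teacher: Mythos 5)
Your proposal is correct and follows essentially the same route as the paper: both recast the hypothesis as $\mathcal{L}_\delta e=-G$, build an explicit quadratic barrier with $\mathcal{L}_\delta(\text{barrier})=-1$, and conclude via the maximum principle of Theorem \ref{maximum_model} together with the (implicit, correctly flagged by you) assumption that $e\equiv 0$ on the interaction domain $I_c$. The only substantive difference is cosmetic but to your credit: the paper's barrier $\varpi(t)=t(1-t)/2+\delta(1+\delta)$ satisfies $\mathcal{L}_\delta\varpi=-1$ only under the standard hidden normalization $\int_{-\delta}^{\delta}s^2\gamma(|s|)\,ds=2$, whereas your barrier $\psi(x)=\bigl((1+\delta)^2-x^2\bigr)/(2\sigma)$ carries the second moment $\sigma$ explicitly and so works without that assumption, and your two-sided comparison with $w_\pm=\pm e-\|G\|_{L^\infty(I)}\psi$ cleanly replaces the paper's normalization $E=|e|/\|G\|_{L^\infty(I)}$.
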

\begin{proof} It is easy to see, from \eqref{Growneq}, that
\begin{equation*}
|e(t)|\int_{-\delta}^\delta\gamma(|s|)ds-\int_{t-\delta}^{t+\delta}|e(y)|\gamma(|y-t|)dy\leq \|G(t)\|_{L^{\infty}},
\end{equation*}
Denoting $E(t):=\frac{|e(t)|}{\|G(t)\|_{L^{\infty}(I)}}$, we rewrite above inequality as,
\begin{equation}\label{Et_inq}
\int_{t-\delta}^{t+\delta}E(y)\gamma(|y-x|)dy-E(t) \int_{-\delta}^\delta\gamma(|s|)ds \geq-1.\\
\end{equation}
For $\varpi(t):=t(1-t)/2+\delta(1+\delta), \; t\in [-1,1]$, it holds that
\begin{equation}\label{Omegafunc}
\int_{t-\delta}^{t+\delta}\varpi(y)\gamma(|y-t|)dy-\varpi(t) \int_{-\delta}^\delta\gamma(|s|)ds =- 1.\\
\end{equation}
In view of \eqref{Et_inq} and \eqref{Omegafunc}, we derive that
\begin{equation}
\int_{t-\delta}^{t+\delta}c(y)\gamma(|y-x|)dy-c(t) \int_{-\delta}^\delta\gamma(|s|)ds\geq 0, \; t\in[-1,1],
\end{equation}
where $c(t):=E(t)-\varpi(t).$ 
Since $e(t)=0,\;t\in(-1-\delta,-1)\cup(1,1+\delta),$  it holds that
\begin{equation}
c(t):=E(t)-\varpi(t)\leq 0,\; \;t\in(-1-\delta,-1)\cup(1,1+\delta).
\end{equation}
Consequently, according to Theorem \ref{maximum_model}, we have
\begin{equation*}c(t)\leq 0,\quad \forall t\in I,\quad i.e.,\;  E(t) \leq \|\varpi(t) \|_{L_{\infty}(I)}, \quad \forall t\in I.
\end{equation*}
Therefore,  let $C=\|\varpi(t) \|_{L_{\infty}(I)}$, we have
\begin{equation*}
	\|e(t)\|_{L^\infty}(I)\leq C\|G\|_{L^\infty}(I),\quad \forall t\in I.
\end{equation*}
\end{proof}

\section{Convergence Analysis}\label{con}

We now analyze the convergence of the scheme. For clarity of presentation,
we assume that the collocation and quadrature points in \eqref{eqxtj} are of the Legendre-Gauss-Lobatto type. The other cases can be treated in a similar fashion.

\subsection{Some Useful Lemmas}
\begin{lemma}\label{inter}{\cite[Lemma 4.8]{STW11}}
Assume that $u\in B_{-1,-1}^m(I)$ with $1\leq m\leq N+1,$ then for any $\phi\in {\mathbb P}_{N},$
Then the following estimates hold
\begin{equation}
\begin{split}
|(u,\phi)-\langle u,\phi\rangle_N|\leqslant c\sqrt{\frac{(N-m+1)!}{N!}}(N+m)^{-(m+1)/2}\|\partial_x^mu\|_{\omega^{m-1,m-1}}\|\phi\|,\\
\end{split}
\end{equation}
where $c$ is a positive constant independent of $m, N, \phi$ and $u.$
\end{lemma}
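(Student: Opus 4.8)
The plan is to read this as a Legendre--Gauss--Lobatto (LGL) quadrature error bound and to reduce it to the interpolation error in the anisotropic Jacobi-weighted seminorm $\|\partial_x^m u\|_{\omega^{m-1,m-1}}$. Write $\langle u,\phi\rangle_N=\sum_{j=0}^N u(x_j)\phi(x_j)\omega_j$ and let $I_N u\in\mathbb{P}_N$ be the LGL interpolant of $u$. Since the quadrature depends only on nodal values, $\langle u,\phi\rangle_N=\langle I_N u,\phi\rangle_N$, and I would split
\[
(u,\phi)-\langle u,\phi\rangle_N=\big(u-I_N u,\phi\big)+\big[(I_N u,\phi)-\langle I_N u,\phi\rangle_N\big].
\]
The first term is a genuine interpolation error tested against $\phi$; the second is a pure polynomial quadrature error for $(I_N u)\phi\in\mathbb{P}_{2N}$, where LGL fails to be exact.

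For the second term I would use exactness of LGL on $\mathbb{P}_{2N-1}$ together with discrete orthogonality: for $p,q\in\mathbb{P}_N$ every cross term $\langle L_k,L_l\rangle_N$ with $k+l\le 2N-1$ agrees with $(L_k,L_l)$, so only the top mode survives and one obtains the clean identity
\[
\langle p,q\rangle_N-(p,q)=\big(\tilde\gamma_N-\gamma_N\big)\,\hat p_N\,\hat q_N,\qquad \tilde\gamma_N:=\langle L_N,L_N\rangle_N=\tfrac{2}{N},
\]
with $\hat p_N,\hat q_N$ the top Legendre coefficients and $\gamma_N=\tfrac{2}{2N+1}$. Taking $p=I_N u$, $q=\phi$, and using $|\tilde\gamma_N-\gamma_N|\sim N^{-1}$ together with $|\hat\phi_N|\le\|\phi\|/\sqrt{\gamma_N}\lesssim\sqrt{N}\,\|\phi\|$, this term is controlled by $c\,N^{-1/2}\,\big|\widehat{(I_N u)}_N\big|\,\|\phi\|$, i.e.\ by the top (aliased) Legendre coefficient of the interpolant. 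For the first term, Cauchy--Schwarz gives $|(u-I_N u,\phi)|\le\|u-I_N u\|\,\|\phi\|$, and the factor $\|u-I_N u\|$ is exactly what carries the factorial/algebraic decay $\sqrt{(N-m+1)!/N!}\,(N+m)^{-(m+1)/2}$.

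The key technical input --- and the main obstacle --- is the sharp LGL interpolation estimate, equivalently the decay of $\widehat{(I_N u)}_N$. To get it one expands $u=\sum_k\hat u_k L_k$ and bounds $|\hat u_k|$ through $\|\partial_x^m u\|_{\omega^{m-1,m-1}}$ using the Sturm--Liouville structure of the Legendre operator: repeated integration by parts against $L_k$ transfers $m$ derivatives onto $u$ and produces normalization constants whose ratios yield $\sqrt{(N-m+1)!/N!}$ (this is where the hypothesis $1\le m\le N+1$ keeps the factorial ratio well-defined), while summing the tail $\sum_{k>N}|\hat u_k|$, which also bounds the aliasing contribution to $\widehat{(I_N u)}_N$, supplies the algebraic factor $(N+m)^{-(m+1)/2}$. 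Keeping these constants explicit and uniform in $m$, rather than absorbing them into an $m$-dependent constant, is the delicate part; both estimates are precisely the weighted interpolation bounds recorded in \cite[Lemma 4.8]{STW11}, and combining the two terms above gives the stated inequality.
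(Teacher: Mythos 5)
The paper does not actually prove this lemma---it is quoted verbatim from \cite[Lemma 4.8]{STW11}---so there is no internal proof to compare against; your proposal is, in outline, a correct reconstruction of the standard textbook argument. Your opening split $(u,\phi)-\langle u,\phi\rangle_N=(u-I_Nu,\phi)+\big[(I_Nu,\phi)-\langle I_Nu,\phi\rangle_N\big]$, justified by $\langle u,\phi\rangle_N=\langle I_Nu,\phi\rangle_N$, is exactly how the cited proof begins. Where STW insert the projection $\pi_{N-1}u$ (exactness of LGL on $\mathbb{P}_{2N-1}$ gives $\langle \pi_{N-1}u,\phi\rangle_N=(\pi_{N-1}u,\phi)$) and use the equivalence of discrete and continuous norms on $\mathbb{P}_N$ to reduce everything to $\|u-I_Nu\|+\|u-\pi_{N-1}u\|$, you instead use the explicit one-mode identity $\langle p,q\rangle_N-(p,q)=(\tilde\gamma_N-\gamma_N)\hat p_N\hat q_N$ with $\tilde\gamma_N=2/N$. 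That identity is correct for LGL (for LG and LGR the quadrature is exact on $\mathbb{P}_{2N}$ and the term vanishes outright), and it is arguably more transparent, since it isolates the quadrature defect in the single aliased top coefficient. Both routes then funnel into the same quantitative input, the sharp $L^2$ bounds on $u-I_Nu$ and $u-\pi_{N-1}u$ in terms of $\|\partial_x^m u\|_{\omega^{m-1,m-1}}$, so the approaches are functionally equivalent.

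Two points to tighten. First, your closing citation is circular as written: you cite \cite[Lemma 4.8]{STW11}---the very statement being proved---for the interpolation bounds; the correct inputs are the interpolation estimate of \cite[Theorem 3.44]{STW11}, reproduced in this paper as Lemma \ref{LGLintp} (convert it to an unweighted bound via $\|v\|\le\|v\|_{\omega^{-1,-1}}$, since $\omega^{-1,-1}=(1-x^2)^{-1}\ge 1$, and note $N^{-1}(N+m)^{(1-m)/2}\le c\,(N+m)^{-(m+1)/2}$ for $1\le m\le N+1$), together with the analogous $L^2$ projection estimate. Second, re-deriving the decay of $\widehat{(I_Nu)}_N$ by bounding $|\hat u_k|$ and summing aliasing tails is harder than needed and risks losing uniformity in $m$; it is cleaner to write $|\widehat{(I_Nu)}_N|\le|\hat u_N|+\gamma_N^{-1/2}\|I_Nu-u\|$ (since the $N$-th Legendre coefficient of $I_Nu-u$ equals $\gamma_N^{-1}(I_Nu-u,L_N)$), and then use $\gamma_N^{1/2}|\hat u_N|\le\|u-\pi_{N-1}u\|$; after multiplying by your prefactor $N^{-1/2}$, both pieces are already of the required size $\sqrt{(N-m+1)!/N!}\,(N+m)^{-(m+1)/2}\|\partial_x^m u\|_{\omega^{m-1,m-1}}$. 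With these substitutions your argument closes correctly.
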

\begin{lemma}{\cite[B.33]{STW11}}
Let $(a,b)$ be a finite interval. There holds the Sobolev inequality:
\begin{equation}\label{ineq1}
\max_{x\in[a,b]}|u(x)|\leq \big( \frac{1}{b-a}+2\big)^{1/2}\|u\|_{L^2(a,b)}^{1/2}\|u\|^{1/2}_{H^1(a,b)},\quad \forall u\in H^1(a,b),
\end{equation}
which is also known as the {\emph{Gagliardo-Nirenberg interpolation inequality}}.
\end{lemma}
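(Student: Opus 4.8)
The plan is to establish this as a standard one-dimensional Sobolev embedding, combining the fundamental theorem of calculus with a Cauchy--Schwarz estimate and an averaging trick over a free reference point. First I would record that since $u\in H^1(a,b)$ in one spatial dimension, $u$ possesses an absolutely continuous representative, so $\max_{x\in[a,b]}|u(x)|$ is well defined and attained; I may then work with smooth $u$ and pass to the general case by density at the end.

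The core identity I would exploit is, for any $x,\xi\in[a,b]$,
\begin{equation*}
u(x)^2=u(\xi)^2+2\int_\xi^x u(t)\,u'(t)\,dt.
\end{equation*}
Enlarging the domain of integration to $(a,b)$ and applying Cauchy--Schwarz gives the pointwise bound
\begin{equation*}
u(x)^2\leq u(\xi)^2+2\int_a^b |u(t)|\,|u'(t)|\,dt\leq u(\xi)^2+2\,\|u\|_{L^2(a,b)}\,\|u'\|_{L^2(a,b)}.
\end{equation*}

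The key step is then to remove the dependence on the arbitrary reference point $\xi$: I would integrate the last inequality in $\xi$ over $(a,b)$ and divide by $b-a$, which replaces $u(\xi)^2$ by its average and produces the factor $1/(b-a)$, yielding
\begin{equation*}
u(x)^2\leq \frac{1}{b-a}\,\|u\|_{L^2(a,b)}^2+2\,\|u\|_{L^2(a,b)}\,\|u'\|_{L^2(a,b)}.
\end{equation*}
Finally, using the elementary norm bounds $\|u\|_{L^2(a,b)}\leq\|u\|_{H^1(a,b)}$ and $\|u'\|_{L^2(a,b)}\leq\|u\|_{H^1(a,b)}$ and factoring out the common product $\|u\|_{L^2(a,b)}\|u\|_{H^1(a,b)}$ gives
\begin{equation*}
u(x)^2\leq \Big(\frac{1}{b-a}+2\Big)\,\|u\|_{L^2(a,b)}\,\|u\|_{H^1(a,b)};
\end{equation*}
taking square roots and then the supremum over $x\in[a,b]$ on the left (the right-hand side being independent of $x$) delivers \eqref{ineq1}.

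There is no genuine obstacle here; this is a soft argument once the right manipulation is spotted. The only point deserving care is the averaging step over $\xi$, which is precisely what converts a pointwise value at a single point into the $L^2$ norm and generates the constant $1/(b-a)$. A minor technical issue is the justification of the chain-rule identity $\frac{d}{dt}u^2=2uu'$ for a function whose derivative lies only in $L^2$, which I would handle by first proving the inequality for $u\in C^1([a,b])$ and then invoking density of $C^\infty([a,b])$ in $H^1(a,b)$ together with continuity of both sides of \eqref{ineq1} under the $H^1$ norm.
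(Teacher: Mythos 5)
Your proof is correct, and since the paper states this lemma without proof---it is quoted directly from \cite[B.33]{STW11}---there is no internal argument to compare against; your route is precisely the standard one used in that reference. The identity $u(x)^2=u(\xi)^2+2\int_\xi^x u(t)u'(t)\,dt$, followed by Cauchy--Schwarz, averaging in $\xi$ over $(a,b)$ to produce the $\frac{1}{b-a}$ term, the bounds $\|u\|_{L^2}\le\|u\|_{H^1}$ and $\|u'\|_{L^2}\le\|u\|_{H^1}$, and a density argument from $C^1([a,b])$ to justify the chain rule for $H^1$ functions, all check out exactly as you wrote them.
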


\begin{lemma}{\cite[B.44]{STW11}}
For any $u\in H^1(a,b)$ with $u(x_0) = 0$ for some $x_0\in (a,b)$, the following Poincare inequality holds:
\begin{equation}\label{ineq2}
\|u\|_{L^2(a,b)}\leq \|u'\|_{L^2(a,b)},\quad \forall u\in H^1(a,b).
\end{equation}
\end{lemma}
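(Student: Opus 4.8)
The plan is to prove the inequality directly from the fundamental theorem of calculus combined with the Cauchy--Schwarz inequality, using only the hypothesis that $u$ vanishes at the interior point $x_0$. First I would invoke the one-dimensional Sobolev embedding $H^1(a,b)\hookrightarrow C[a,b]$, which guarantees that $u$ has an absolutely continuous representative; this is what makes the pointwise condition $u(x_0)=0$ meaningful and legitimizes the integral representation used below.

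With that in hand, the key identity is that for every $x\in(a,b)$,
\[
u(x)=u(x_0)+\int_{x_0}^{x}u'(s)\,ds=\int_{x_0}^{x}u'(s)\,ds,
\]
since $u(x_0)=0$. Applying the Cauchy--Schwarz inequality to the right-hand side gives
\[
|u(x)|^2\le |x-x_0|\int_{a}^{b}|u'(s)|^2\,ds\le (b-a)\,\|u'\|_{L^2(a,b)}^2,
\]
where in the last step I bound $|x-x_0|\le b-a$ and enlarge the inner integral to all of $(a,b)$. Integrating this pointwise bound over $x\in(a,b)$ then yields
\[
\|u\|_{L^2(a,b)}^2\le (b-a)^2\,\|u'\|_{L^2(a,b)}^2,
\]
so that $\|u\|_{L^2(a,b)}\le (b-a)\,\|u'\|_{L^2(a,b)}$. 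On the normalized interval of unit length relevant to the subsequent analysis the factor $b-a$ reduces to the constant displayed in the statement; in general the constant for this one-sided Poincar\'e inequality is of the order of the interval length.

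I do not expect any genuine obstacle here: once the regularity has been arranged, the whole estimate is two lines of calculus. The only point that really needs care --- and the step I would write out most explicitly --- is the justification of the integral representation, since the pointwise evaluation $u(x_0)=0$ is a priori a statement about a representative of an $L^2$ equivalence class rather than about $u$ as an abstract element of $H^1$. A clean alternative that sidesteps any measure-theoretic subtlety is to first establish the bound for $u\in C^1[a,b]$, where the fundamental theorem of calculus is elementary, and then pass to the limit using the density of $C^1[a,b]$ in $H^1(a,b)$ together with the continuity of both sides of the inequality with respect to the $H^1$ norm.
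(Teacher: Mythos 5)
The paper does not actually prove this lemma --- it is quoted from \cite[B.44]{STW11} as a known fact --- so there is no internal argument to measure yours against; your write-up supplies the missing proof. The argument you give is the standard one and is sound: the embedding $H^1(a,b)\hookrightarrow C[a,b]$ gives the absolutely continuous representative needed to make $u(x_0)=0$ meaningful, the identity $u(x)=\int_{x_0}^{x}u'(s)\,ds$ plus Cauchy--Schwarz gives $|u(x)|^2\le |x-x_0|\,\|u'\|_{L^2(a,b)}^2$, and integrating in $x$ yields $\|u\|_{L^2(a,b)}\le (b-a)\,\|u'\|_{L^2(a,b)}$. The alternative you sketch --- prove it for $C^1[a,b]$ and pass to the limit by density --- is equally legitimate.

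The one substantive point, which you half-acknowledge but should state outright, is that what you prove is \emph{not} the inequality as printed. The statement claims the constant $1$ on an arbitrary finite interval, and that is simply false when $b-a$ is large: on $(0,L)$ the function $u(x)=\cos(\pi x/L)$ vanishes at the interior point $x_0=L/2$, yet $\|u\|_{L^2}/\|u'\|_{L^2}=L/\pi>1$ once $L>\pi$. So the constant must scale with the interval length, exactly as your proof produces; the paper's transcription has evidently dropped a factor of order $b-a$ that the cited source carries. Your remark about a ``normalized interval of unit length'' is not quite an escape hatch either, since the lemma is invoked in the paper on $I=(-1,1)$, of length $2$ --- there it is applied in the bound \eqref{J2} to $u-I_Nu$, which vanishes at the Legendre--Gauss--Lobatto nodes, and the factor $b-a=2$ is harmlessly absorbed into the generic constant $c$. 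The clean fix is to state and prove the inequality as $\|u\|_{L^2(a,b)}\le (b-a)\,\|u'\|_{L^2(a,b)}$ (your bound; the sharp constant is even smaller) and note that the application only needs some constant depending on the interval. With that correction your proof is complete.
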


\begin{lemma}\label{LGLintp}{\cite[Theorem 3.44]{STW11}}
For any $u\in B^m_{-1,-1}(I)$, we have that for $1 \leq m \leq N + 1$,
\begin{equation}
\|\partial_x(I_Nu-u)\|+N\|I_Nu-u\|_{\omega^{-1,-1}}\leq c\sqrt{\frac{(N-m+1)!}{N!}}(N+m)^{(1-m)/2}\|\partial_x^m u\|_{\omega^{m-1,m-1}}.
\end{equation}
\end{lemma}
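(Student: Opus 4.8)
The plan is to follow the standard projection-comparison technique: bound the interpolation error by a projection error that already has the desired form, plus a polynomial remainder, and control that remainder through the Gauss--Lobatto quadrature consistency estimate of Lemma~\ref{inter}.

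First I would introduce the $H^1$-type projection $\pi_N^{1,0}:H^1(I)\to\mathbb P_N$ fixed by $(\partial_x(\pi_N^{1,0}u-u),\partial_x\phi)=0$ for every $\phi\in\mathbb P_N$ vanishing at $x=\pm1$, together with $(\pi_N^{1,0}u-u)(\pm1)=0$. Obtained by truncating the Legendre expansion of $\partial_x u$ and integrating, this projection satisfies $\|\partial_x(\pi_N^{1,0}u-u)\|+N\|\pi_N^{1,0}u-u\|_{\omega^{-1,-1}}\le c\,A_{N,m}\|\partial_x^m u\|_{\omega^{m-1,m-1}}$, where $A_{N,m}=\sqrt{(N-m+1)!/N!}\,(N+m)^{(1-m)/2}$ is exactly the factor on the right of the claim. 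Writing $I_Nu-u=\psi+(\pi_N^{1,0}u-u)$ with $\psi:=I_Nu-\pi_N^{1,0}u=I_N(u-\pi_N^{1,0}u)\in\mathbb P_N$, the triangle inequality reduces the lemma to estimating $\|\partial_x\psi\|+N\|\psi\|_{\omega^{-1,-1}}$; note that $\psi(\pm1)=0$, since both $I_Nu$ and $\pi_N^{1,0}u$ reproduce $u$ at the endpoints (which are Gauss--Lobatto nodes).

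Next I would collapse the two remainder terms into one. For $\psi\in\mathbb P_N$ with $\psi(\pm1)=0$ the sharp inverse inequality $\|\partial_x\psi\|\le cN\|\psi\|_{\omega^{-1,-1}}$ holds, so it suffices to prove $\|\psi\|_{\omega^{-1,-1}}\le cN^{-1}A_{N,m}\|\partial_x^m u\|_{\omega^{m-1,m-1}}$. To reach $\psi$ I would pass to the discrete inner product $\langle\cdot,\cdot\rangle_N$: using the nodal identity $\langle I_Nu,\chi\rangle_N=\langle u,\chi\rangle_N$ and the norm equivalence on $\mathbb P_N$, I would write $\langle\psi,\psi\rangle_N=\langle u-\pi_N^{1,0}u,\psi\rangle_N$ and split it as $(u-\pi_N^{1,0}u,\psi)+[\langle u,\psi\rangle_N-(u,\psi)]-[\langle\pi_N^{1,0}u,\psi\rangle_N-(\pi_N^{1,0}u,\psi)]$. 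The first term is controlled by the projection estimate after a weighted Cauchy--Schwarz split $|(u-\pi_N^{1,0}u,\psi)|\le\|u-\pi_N^{1,0}u\|_{\omega^{1,1}}\|\psi\|_{\omega^{-1,-1}}$; the second is precisely the quadrature-consistency error for $u\psi$, and Lemma~\ref{inter} bounds it by $c\,A_{N,m}(N+m)^{-1}\|\partial_x^m u\|_{\omega^{m-1,m-1}}\|\psi\|$, i.e. one power of $N$ below $A_{N,m}$, as required.

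The main obstacle is the third term, the aliasing error $\langle\pi_N^{1,0}u,\psi\rangle_N-(\pi_N^{1,0}u,\psi)$ for the product $\pi_N^{1,0}u\cdot\psi\in\mathbb P_{2N}$: since the Gauss--Lobatto rule is exact only up to degree $2N-1$, this does not vanish and must be shown to be no larger than the projection error. I expect to isolate the top Legendre mode, where the discrete weight $\gamma_N=2/N$ differs from the exact $2/(2N+1)$ by an $O(1/N)$ amount, so that the error reduces to the product of the $N$-th Legendre coefficients of $\pi_N^{1,0}u$ and of $\psi$; bounding the former by the truncation estimate and the latter by $\|\psi\|$ then closes the loop. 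The remaining care, which I do not regard as routine, is the bookkeeping between the unweighted norm delivered by $\langle\cdot,\cdot\rangle_N$ and the weighted target $\|\psi\|_{\omega^{-1,-1}}$; using $\psi(\pm1)=0$ I would write $\psi=(1-x^2)\phi$ with $\phi\in\mathbb P_{N-2}$ and invoke the Jacobi-weighted inverse and quadrature inequalities of \cite{STW11} to keep the weighted estimate at the correct order $N^{-1}A_{N,m}$ rather than losing a spurious factor of $N$.
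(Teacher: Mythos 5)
First, a point of comparison: the paper itself contains no proof of this lemma --- it is quoted verbatim from \cite[Theorem 3.44]{STW11} --- so your attempt can only be measured against the textbook argument. Your architecture is the right one and matches the standard proof pattern: compare $I_Nu$ with the endpoint-preserving $H^1$-projection, reduce everything to the polynomial remainder $\psi=I_N(u-\pi_N^{1,0}u)\in\mathbb{P}_N$ with $\psi(\pm1)=0$, and exploit aliasing only in the top mode. Several ingredients you invoke are correct and even sharp: in particular $\|\partial_x\psi\|\le cN\|\psi\|_{\omega^{-1,-1}}$ does hold for $\psi\in\mathbb{P}_N$ vanishing at $\pm1$ (the functions $(1-x^2)L_n'$ are orthogonal in both quadratic forms, with Rayleigh quotient $n(n+1)$), your top-mode identity $\langle p,q\rangle_N-(p,q)=(\tilde\gamma_N-\gamma_N)\hat p_N\hat q_N$ for $p,q\in\mathbb{P}_N$ is exactly right for Gauss--Lobatto, and the factor $(N+m)^{-1}$ gained from Lemma \ref{inter} relative to $A_{N,m}:=\sqrt{(N-m+1)!/N!}\,(N+m)^{(1-m)/2}$ is correctly accounted.

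The genuine gap is the step you yourself flag as ``not routine,'' and it is not a bookkeeping issue but the crux. Your engine $\langle\psi,\psi\rangle_N$ controls only the \emph{unweighted} norm $\|\psi\|^2$ (the discrete--continuous equivalence on $\mathbb{P}_N$ is for $\omega^{0,0}$), while the target is $\|\psi\|_{\omega^{-1,-1}}\le cN^{-1}A_{N,m}\|\partial_x^mu\|_{\omega^{m-1,m-1}}$. Your first term even feeds $\|\psi\|_{\omega^{-1,-1}}$ into an inequality whose left side is $\|\psi\|^2$, which cannot be absorbed. The best the unweighted argument yields is $\|\psi\|\lesssim N^{-1}A_{N,m}\|\partial_x^mu\|_{\omega^{m-1,m-1}}$, and converting via the inverse inequality $\|\psi\|_{\omega^{-1,-1}}\le cN\|\psi\|$ loses a full power of $N$; that inequality is sharp (witnessed by polynomials concentrated in $O(N^{-2})$ endpoint layers), and the alternative of proving $\|\psi\|\le cN^{-2}A_{N,m}\|\partial_x^mu\|_{\omega^{m-1,m-1}}$ is false in general, since $I_N(u-\pi_N^{1,0}u)$ is generically of the same $L^2$ size as $u-\pi_N^{1,0}u$. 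The fallback in your last paragraph also stalls: writing $\psi=(1-x^2)\phi$ and testing with $\phi$ gives $\|\psi\|_{\omega^{-1,-1}}^2=(\psi,\phi)=\langle\psi,\phi\rangle_N$ exactly (degree $2N-2$), but Lemma \ref{inter} then pays $\|\phi\|$ in the \emph{unweighted} norm, and $\|\phi\|\lesssim N\|\phi\|_{\omega^{1,1}}=N\|\psi\|_{\omega^{-1,-1}}$ is again sharp, so the same factor of $N$ reappears. Closing the weighted estimate requires a genuinely weighted tool not present in your sketch --- either a quadrature-consistency bound in which the test function is measured in $\|\cdot\|_{\omega^{1,1}}$, or the interpolation stability estimate $\|I_Nv\|\le c\bigl(\|v\|+N^{-1}\|\partial_xv\|_{\omega^{1,1}}\bigr)$ of \cite{STW11}, which is how the source handles it. Until that piece is supplied, the proposal proves the $\|\partial_x(I_Nu-u)\|$ bound only conditionally and does not establish the $N\|I_Nu-u\|_{\omega^{-1,-1}}$ half of the lemma.
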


\subsection{Error analysis in $L_{\infty}$}
\begin{thm} Let $u$ be the exact solution of \eqref{eqxy}, and  assume that
\begin{equation}
\left \{ \begin{split}
&u_N=\sum_{k=0}^N u_k h_k(x), \quad \forall x\in I\\
&u_N=g(x), \qquad \forall x\in (-1-\delta,-1)\cup(1,1+\delta).
\end{split}\right.
\end{equation}
where $u_k$ is given by \eqref{coll} and $h_k(x)$ is the $k$-th Lagrange basis function associated with the
Gauss-points $\{x_k\}^N_{k=0}.$ If
\begin{equation}
\gamma(x,y)\in L^{\infty}(D)\cap L^{\infty}(I;B^k_{-1,-1}(I)),\;\partial_x\gamma(x,y)\in L^{\infty}(D),\;u\in B^m_{-1,-1}(I),
\end{equation}
where $D = \{(x,y): -1 \leq x,y \leq 1\}$ and $1 \leq k,m \leq  N + 1$. Then we have
\begin{equation}\label{uLinfinity}
\begin{split}
\|u-u_N\|_{L^\infty(I)}\leq&  c\sqrt{\frac{(N-k+1)!}{N!}}(N+k)^{-k/2}\|\partial_y^k\gamma(x,\cdot)\|_{\omega^{k-1,k-1}}\|u\|\\
&+c \sqrt{\frac{(N-m+1)!}{N!}}(N+m)^{-m/2}\|\partial_x^mu\|_{\omega^{m-1,m-1}}.
\end{split}
\end{equation}
provided that $N$ is sufficiently large, where $C$ is a constant independent of $N.$
\end{thm}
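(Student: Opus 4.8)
The plan is to measure the error through the Legendre--Gauss--Lobatto interpolant $I_Nu$ and the triangle inequality
\[
\|u-u_N\|_{L^\infty(I)}\le\|u-I_Nu\|_{L^\infty(I)}+\|I_Nu-u_N\|_{L^\infty(I)},
\]
treating the two pieces separately: the interpolation error will reproduce the second term of \eqref{uLinfinity}, while the discrete error $e_N:=I_Nu-u_N\in\mathbb P_N$ will, through the stability Lemma \ref{nolocalG}, reproduce the first term. For the interpolation piece I would feed the two estimates of Lemma \ref{LGLintp} (for $\|\partial_x(I_Nu-u)\|$ and $N\|I_Nu-u\|_{\omega^{-1,-1}}$) into the Gagliardo--Nirenberg inequality \eqref{ineq1}; the $L^2$ factor there supplies the extra $N^{-1/2}$ that turns the exponent $(1-m)/2$ of Lemma \ref{LGLintp} into the exponent $-m/2$ of \eqref{uLinfinity}, while the weighted norm on the right produces $\|\partial_x^mu\|_{\omega^{m-1,m-1}}$.

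For the discrete error I would evaluate the exact relation \eqref{eqs} at a collocation node $x_i$, subtract the collocation identity \eqref{equnxisj}, and use $I_Nu(x_i)=u(x_i)$. This produces, at each node, a nonlocal relation of the form treated in Lemma \ref{nolocalG},
\[
e_N(x_i)\int_{-\delta}^{\delta}\gamma(|s|)\,ds-\int_{x_i-\delta}^{x_i+\delta}e_N(y)\gamma(|y-x_i|)\,dy=G(x_i),
\]
whose right-hand side splits as
\begin{equation*}
\begin{split}
G(x_i)={}&\int_{x_i-\delta}^{x_i+\delta}(u-I_Nu)(y)\gamma(|y-x_i|)\,dy\\
&+\Big[\int_{x_i-\delta}^{x_i+\delta}u_N(y)\gamma(|y-x_i|)\,dy-\delta\sum_{j}u_N(y(x_i,t_j))\gamma(x_i,y(x_i,t_j))\omega_j\Big],
\end{split}
\end{equation*}
i.e. an integrated interpolation error plus the Legendre--Gauss--Lobatto quadrature error of the product $u_N\gamma$. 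Invoking Lemma \ref{nolocalG} then gives $\|e_N\|_{L^\infty(I)}\le C\|G\|_{L^\infty(I)}$, and everything reduces to an $L^\infty$ bound on $G$. I would bound the integrated interpolation error by $2\delta\,\|\gamma\|_{L^\infty(D)}\,\|u-I_Nu\|_{L^\infty}$, which is absorbed into the second term of \eqref{uLinfinity}. For the quadrature error I would pass to $[-1,1]$ via $y=x_i+\delta t$ and apply Lemma \ref{inter} with the kernel trace $t\mapsto\gamma(x_i,x_i+\delta t)$ in the ``rough'' slot, extracting $k$ derivatives as $\|\partial_y^k\gamma(x,\cdot)\|_{\omega^{k-1,k-1}}$, and the polynomial $t\mapsto u_N(x_i+\delta t)\in\mathbb P_N$ in the ``smooth'' slot, whose $L^2$ norm is comparable to $\|u\|$ (up to $\delta$-factors and the absorption of the small multiple of $\|u-u_N\|$ for $N$ large, which is where the largeness of $N$ is used). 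The hypothesis $\gamma\in L^\infty(I;B^k_{-1,-1}(I))$ is precisely what renders this quadrature bound uniform in the base point, so that it controls $\|G\|_{L^\infty(I)}$ and not merely its nodal values; this yields the first term of \eqref{uLinfinity}, and collecting the three estimates gives the claim.

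The main obstacle is the passage from the nodal relations to the genuinely pointwise hypothesis of Lemma \ref{nolocalG}: the scheme only pins down $G$ at the nodes $\{x_i\}$, whereas Lemma \ref{nolocalG} requires the nonlocal identity $\mathcal L_\delta e_N=-G$ to hold for every $t\in I$, and the naive choice $G=-\mathcal L_\delta e_N$ is circular since $\mathcal L_\delta e_N(t)$ at a non-node $t$ again involves the unknown $e_N$. I would bridge this either by rerunning the barrier argument behind Theorem \ref{maximum_model} at the discrete level---the positivity of the Legendre--Gauss--Lobatto weights and of $\gamma$, together with the quadratic comparison function $\varpi$ of Lemma \ref{nolocalG}, give a discrete maximum principle controlling $\max_i|e_N(x_i)|=\max_i|u(x_i)-u_N(x_i)|$, after which $\|e_N\|_{L^\infty(I)}\le\Lambda_N\max_i|e_N(x_i)|$ costs only the logarithmic Lebesgue constant $\Lambda_N$ of the nodal set (harmlessly absorbed into the spectral rate)---or, alternatively, by estimating the off-node residual directly, which is exactly where the hypothesis $\partial_x\gamma\in L^\infty(D)$ enters, to bound the $t$-derivative of the quadrature functional. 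This stability step, rather than the interpolation and quadrature estimates, carries the real analytical weight; the remaining work is bookkeeping of the $\delta$-factors and the careful matching of the powers of $(N+k)$ and $(N+m)$.
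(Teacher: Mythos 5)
Your proposal follows essentially the same route as the paper's proof: stability via the maximum-principle Lemma \ref{nolocalG}, the quadrature error of $u_N\gamma$ via Lemma \ref{inter} with the kernel in the rough slot and $u_N$ in the smooth slot (and $\|u_N\|\le\|u\|+\|e\|$ absorbed for $N$ large), and the interpolation error via Lemma \ref{LGLintp} combined with the Gagliardo--Nirenberg inequality \eqref{ineq1}; the paper merely organizes the bookkeeping differently, writing a single error equation for $e=u-u_N$ with three defect terms $I_NJ_1$, $J_2=(u-I_Nu)\int_{-\delta}^{\delta}\gamma(|s|)\,ds$ and $J_3$, instead of your triangle-inequality split through $I_Nu$. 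Your second bridge for the node-to-pointwise passage is exactly the paper's device: applying $I_N$ to the nodal relations leaves as off-node residual the interpolation error $J_3$ of $x\mapsto\int_{x-\delta}^{x+\delta}e(y)\gamma(x,y)\,dy$, estimated by the $m=1$ case of Lemma \ref{LGLintp} --- this is precisely where $\partial_x\gamma\in L^{\infty}(D)$ and the boundary traces $\gamma(x,x\pm\delta)$ enter --- yielding $\|J_3\|_{L^{\infty}}\le cN^{-1/2}\|e\|_{L^{\infty}}$, which is absorbed for $N$ sufficiently large, resolving the circularity you worried about.

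Two caveats. First, your other proposed bridge, a discrete maximum principle, would fail as stated: the discrete operator couples the nodal unknowns through $h_k(y(x_i,t_j))$, the Lagrange basis evaluated at off-node quadrature points, and these values are not sign-definite, so positivity of the Legendre--Gauss--Lobatto weights and of $\gamma$ does not make the collocation matrix monotone, and no comparison argument with the barrier $\varpi$ of Lemma \ref{nolocalG} goes through at the discrete level; fortunately your alternative coincides with the paper's actual argument. Second, the Lebesgue constant of the Legendre--Gauss--Lobatto nodes grows like $\sqrt{N}$ (as the paper uses, citing \cite{QW88}), not logarithmically; this is harmless for the stated result --- indeed it is exactly this $\sqrt{N}$ that degrades the quadrature exponent from $-(k+1)/2$ in Lemma \ref{inter} to the $-k/2$ appearing in \eqref{uLinfinity} --- but your rate bookkeeping should charge half a power of $N$ rather than a logarithm.
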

\begin{proof} Let $I_N$ be the Legendre-Gauss-Lobatto interpolation operator. We start from \eqref{eqxs} and reformulate  it as
\begin{equation}\label{eqI_points}
	\int_{x_i-\delta}^{x_i+\delta}u_N(y)\gamma(|y-x_i|)dy-
	u_i\int_{-\delta}^\delta\gamma(|s|)ds
	=f(x_i)+J_1(x_i),\;x\in I.
\end{equation}
where
\begin{equation}
J_1(x)=\delta\Big(\int_{-1}^1u_N(y(x_i,t))\gamma(x,y(x_i,t))dt-\sum_{j=0}^Nu_N(y(x_i,t_j))\gamma(x,y(x_i,t_j))\omega_j\Big).
\end{equation}
Multiply $h_i(x)$ on both sides, and take the summation from $0$ to $N$, we have,
\begin{equation}\label{eqIn}
I_N\left(\int_{x-\delta}^{x+\delta}u_N(y)\gamma(|y-x|)dy\right) -
u_N(x)\int_{-\delta}^\delta\gamma(|s|)ds
=I_N f+I_N J_1,\;x\in I.
\end{equation}
Clearly, by \eqref{eqs},
\begin{equation}\label{eqInf}
I_N f=I_N\left(\int_{x-\delta}^{x+\delta}u(y)\gamma(|y-x|)dy\right)-I_N u\int_{-\delta}^\delta\gamma(|s|)ds.
\end{equation}
Denote $e=u-u_N.$ Inserting \eqref{eqInf} into \eqref{eqIn} leads to the error equation:
\begin{equation}
e(x)\int_{-\delta}^\delta\gamma(|s|)ds-\int_{x-\delta}^{x+\delta}e(y)\gamma(|y-x|)dy=I_NJ_1+J_2(x)+J_3(x),
\end{equation}

where $e(y)=0$ for $y\in(-1-\delta,-1)\cup(1,1+\delta)$, and
\begin{equation}
\begin{split}
&J_2(x)=(u-I_Nu)\int_{-\delta}^\delta\gamma(|s|)ds,\\
&J_3(x)=\int_{x-\delta}^{x+\delta}e(y)\gamma(|y-x|)dy-I_N\left(\int_{x-\delta}^{x+\delta}e(y)\gamma(|y-x|)dy\right).
\end{split}
\end{equation}
According to Lemma \ref{nolocalG},
\begin{equation}\label{Gterm}
\|e(x)\|_{L^{\infty}(I)} \leq C(\|(I_NJ_1)\|_{L^{\infty}(I)}+ \|J_2\|_{L^{\infty}(I)}+ \|J_3\|_{L^{\infty}(I)}).
\end{equation}
It remains to estimate the three terms on the right hand side of \eqref{Gterm}. 
By Lemma \ref{inter},
\begin{equation}
\begin{split}
|J_1(x)|&=|\delta\Big(\int_{-1}^1u_N(y(x,\cdot))\gamma(x,y(x,\cdot))dt-\sum_{j=0}^Nu_N(y(x,t_j))\gamma(x,y(x,t_j))\omega_j\Big)|\\
&\leq  c\sqrt{\frac{(N -k+ 1)!}{N!}}(N+k)^{-\frac{k+1}{2}}\times\delta\|\partial_t^k\gamma(x,y(x,\cdot))\|_{\omega_{k-1,k-1}}\|u_N(y(x,\cdot))\|
\end{split}
\end{equation}
A direct calculation yields
\begin{equation}
\begin{split}
\|\partial_t^k\gamma(\cdot)\|^2_{\omega^{k-1,k-1}}&=\int_{-1}^1|\partial_t^k\gamma(x,y(x,t))|^2(1-t^2)^{k-1}dt\\
&=\delta\int_{x-\delta}^{x+\delta} |\partial_y^k\gamma(x,y))|^2(x+\delta-y)^{k+1}(\delta-x+y)^{k+1}dy\\
&\leq\|\partial_y^k\gamma(x,\cdot)\|^2_{\omega^{k-1,k-1}},
\end{split}
\end{equation}
and
\begin{equation}
\delta\|u_N(y(x,\cdot)\|^2=\int_{x-\delta}^{x+\delta}|u_N(y)|^2dy\leq \|u_N\|^2.\\
\end{equation}
Hence, we obtain the estimate of $|J_1|:$
\begin{equation}
|J_1(x)|\leq c\sqrt{\frac{(N-k+1)!}{N!}}(N+k)^{-(k+1)/2}\|\partial_y^k\gamma(x,\cdot)\|_{\omega^{k-1,k-1}}\|u_N\|.
\end{equation}
In what follows, we  use the asymptotic estimate of the Lebesgue constant (cf,\cite{QW88}):
\begin{equation}
\max_{|x|\leq 1}\sum_{j=0}^N|h_j(x)|\simeq \sqrt{N}, \; N\gg 1.
\end{equation}
This implies
\begin{equation}\label{INJ1}
\begin{split}
\|I_NJ_1\|_{L^\infty}&\leq c\|J_1\|_{L^\infty}\max_{|x|\leq 1}\sum_{j=0}^N|h_j(x)|\\
&\leq c\sqrt{\frac{(N-k+1)!}{N!}}(N+k)^{-k/2}\|\partial_y^k\gamma(x,\cdot)\|_{\omega^{k-1,k-1}}\|u_N\|\\
&\leq c\sqrt{\frac{(N-k+1)!}{N!}}(N+k)^{-k/2}\|\partial_y^k\gamma(x,\cdot)\|_{\omega^{k-1,k-1}}(\|e\|+\|u\|).
\end{split}
\end{equation}
Using the inequalities \eqref{ineq1} and \eqref{ineq2}, we obtain from Theorem \ref{LGLintp} that
\begin{equation}\label{J2}
\begin{split}
\|J_2\|_{L^{\infty}}&\leq c\|u-I_Nu\|^{1/2}\|\partial _x(u-I_Nu)\|^{1/2}\\
&\leq c \sqrt{\frac{(N-m+1)!}{N!}}(N+m)^{-m/2}\|\partial_x^mu\|_{\omega^{m-1,m-1}}.
\end{split}
\end{equation}
Moreover, using Theorem \ref{LGLintp} with $m = 1$ yields
\begin{equation}
\begin{split}
\|J_3\|\leq & cN^{-1}\|\gamma(x,x+\delta)e(x+\delta)-\gamma(x,x-\delta)e(x-\delta)+\int_{x-\delta}^{x+\delta}\partial_x\gamma(x,y)e(y)dy\|\\
\leq & cN^{-1}\Big(\max_{|x\pm \delta|\leq 1}|\gamma(x,x+\delta)|+\max_{D}\|\partial_x\gamma(x,y)\|_{L^\infty}\Big)\|e\|.\\
\end{split}
\end{equation}
Then, we have
\begin{equation}\label{J3}
\begin{split}
\|J_3\|_{L^{\infty}}&\leq c \|J_3\|^{1/2}\|\partial_xJ_3\|^{1/2}\leq cN^{-1/2}\|e\|^{1/2}\|\partial_x\int_{x-\delta}^{x+\delta}\gamma(x,y)e(y)dy\|\\
&\leq cN^{-1/2}\|e\|\leq cN^{-1/2}\|e\|_{L^\infty}.
\end{split}
\end{equation}
Finally, a combination of \eqref{INJ1}, \eqref{J2} and \eqref{J3} leads to the estimate .
\end{proof}

\section{Numerical Experiments}\label{exp}
Without lose of generality, we will only use the Legendre-Gauss-Lobatto points  as the collocation points. Our numerical evidences show that the other
two kinds of Legendre-Gauss points produce results with similar accuracy.

{\bf{Example 1.}} We first consider the  equation (2.1) with
\begin{equation}
\gamma(x)=\frac{3}{\delta^3},\;f_\delta(x)=-\frac{6e^{4x}}{\delta^3}-\frac{3e^{4x}(e^{-4\delta}- e^{4\delta})}{4\delta^3}
\end{equation}
The corresponding exact solution is given by $u(x) = e^{4x}$.\\

\begin{figure}[!ht]\label{exp1}
  \begin{center}
    \includegraphics[width=0.43\textwidth]{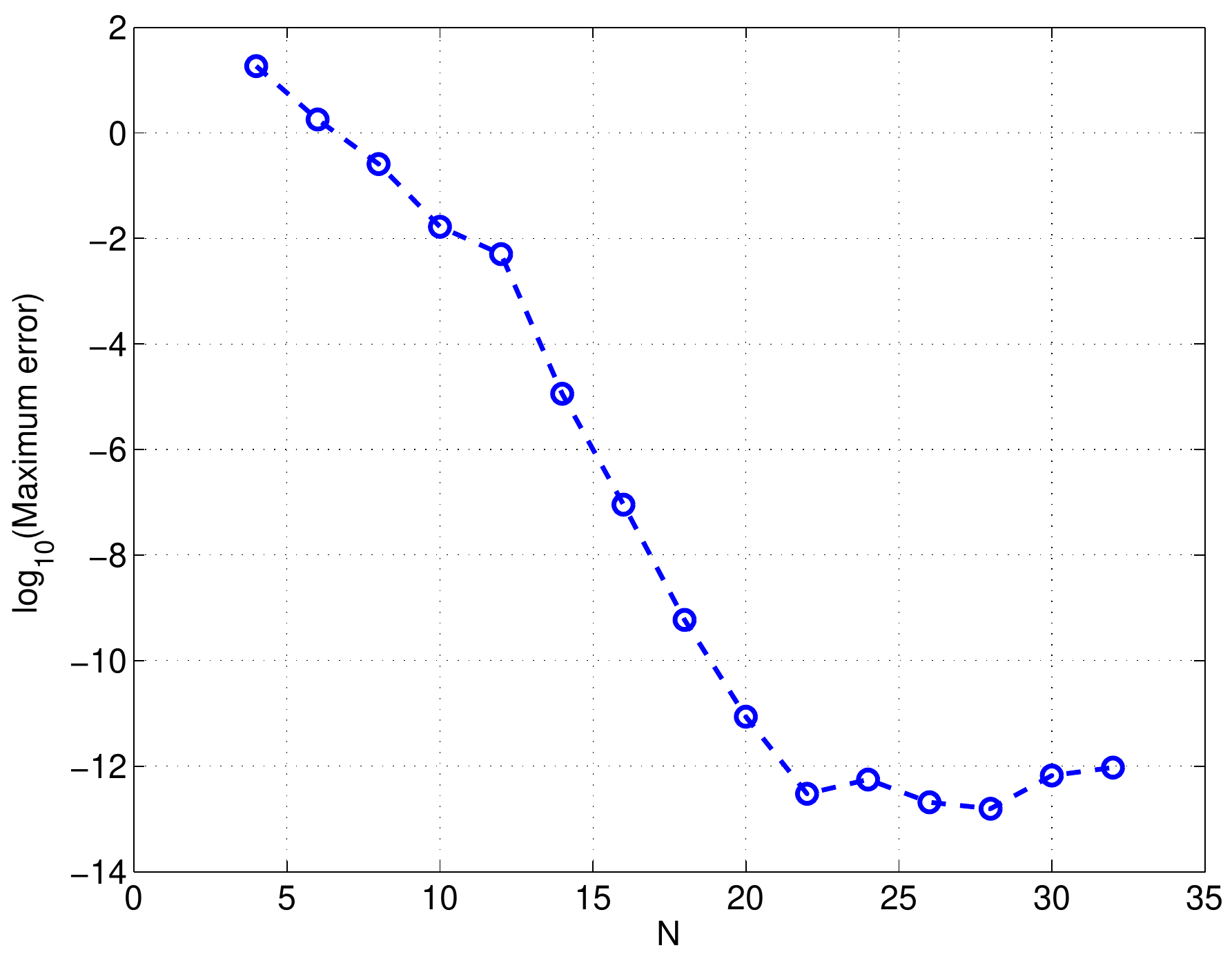}
    \includegraphics[width=.45\textwidth]{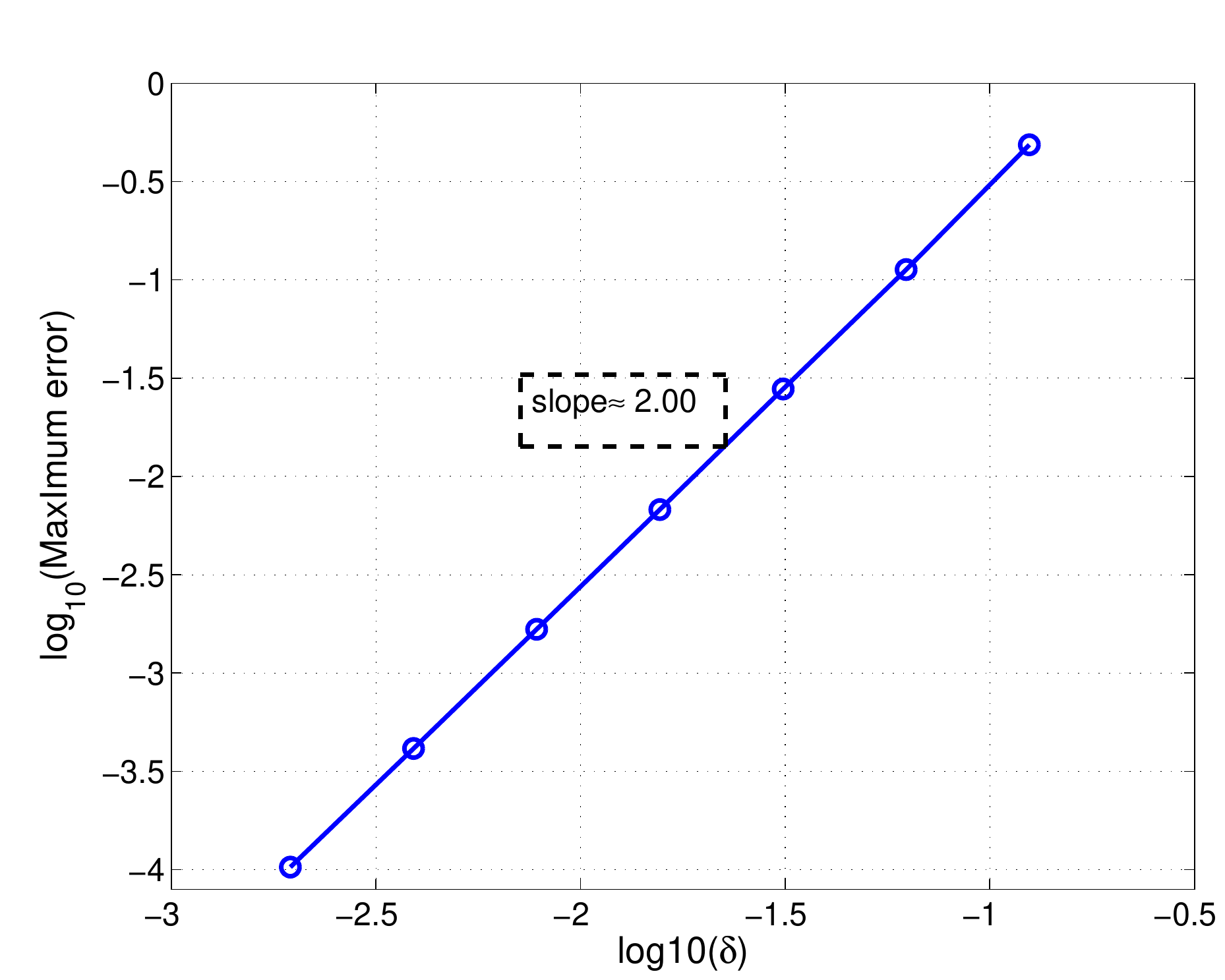}
  \end{center}
\caption{\small Left: Maximum error for the example 1; Right: $\log_{10}$(maximum error) against $\log_{10}(\delta)$ for N=64.}\label{exp1}
  \end{figure}
In Figure \ref {exp1} (left), we plot $\log_{10}$ (Maximum error)  against $N \in [4,32],$ and observe that the desired spectral accuracy is obtained.
We plot in Figure \ref {exp1} (right), $\log_{10}$ (Maximum error)  against $\log_{10}(\delta).$ The slopes are nearly equal to $2$. These results indicate that when we use set a relatively high number of collocation points,($N=64$), the error from spectral collocation discretization is relatively negligible. The error left is only the error between nonlocal diffusion models and local diffusion models, which is order of $2$ for $\delta$.

\section{Conclusions}

In this paper, we presented a spectral method for a nonlocal diffusion model and provide a rigorous error analysis which
theoretically justifies the spectral rate of convergence provided that the kernel function and the source function are sufficiently smooth.

We mainly focus on one-dimension case in this paper, and there is no difficulty to extent this algorithm to a higher dimension, especially it will be obvious in spectral collocation methods, the numerical integration to assemble stiff matrices is more accurate. As is seen in the paper, to apply the spectral collocation methods, the kernel function cannot be too singular.  In the future we will focus on how to develop spectral collocation methods to deal with nonlocal diffusion models with singular kernels.

\bibliographystyle{plain}
\bibliography{Thesisreferpapers}

\end{document}